\numberwithin{equation}{section}
\def\C{\mathbb{C}}
\def\gs{\begin{aligned}}
\def\egs{\end{aligned}}
\theoremstyle{plain}
\newtheorem{thm}{Theorem}[section]
\newtheorem{theorem}[thm]{Theorem}
\newtheorem{lemma}[thm]{Lemma}
\newtheorem{conjecture}[thm]{Conjecture}
\newcommand{\pf}{\noindent\begin {proof}}
\newcommand{\epf}{\end{proof}}
\begin{document}

\title[The  eigenvectors-eigenvalues identity and  Sun's conjectures]
{
The  eigenvectors-eigenvalues identity and  Sun's conjectures on determinants and permanents }
\author{Xuejun Guo}
\address{
Xuejun Guo  \\
Department of Mathematics\\
Nanjing University\\
Nanjing 210093, China}
\email{guoxj@nju.edu.cn}

\author{Xin Li}
\address{
Xin Li \\
Department of Mathematics\\
Nanjing University\\
Nanjing 210093, China}
\email{lix@smail.nju.edu.cn}

\author{Zhengyu Tao}
\address{
Zhengyu Tao \\
Department of Mathematics\\
Nanjing University\\
Nanjing 210093, China}
\email{taozhy@smail.nju.edu.cn}

\author{Tao Wei}
\address{
Tao Wei \\
Department of Mathematics\\
Nanjing University\\
Nanjing 210093, China}
\email{weitao@smail.nju.edu.cn}

\thanks{The authors are supported by National Nature Science Foundation of China (Nos. 11971226, 11631009).}

\date{}

\maketitle

\noindent

\begin{abstract}
In this paper, we prove a conjecture raised by Zhi-Wei Sun in 2018 by the  eigenvectors-eigenvalues identity  
found by  Denton, Parke,  Tao and X.  Zhang in 2019.
\end{abstract}

\medskip

\textbf{Keywords:}  eigenvectors-eigenvalues identity, determinant

\textbf{2020 Mathematics Subject Classification:}Primary 11C20, 15A15; Secondary 05A19,
11A07, 33B10

 \section{\bf \Large Introduction}
 
In 2019,  Peter B. Denton, Stephen J. Parke, Terence Tao and Xining Zhang \cite{Tao} found the  legendary eigenvector-eigenvalue identity and showed how this identity can also be used to extract the relative phases between the components of any given eigenvector.    In this paper, we will prove several conjectural identities of Zhi-Wei Sun on  determinants and permanents of matrices involving roots of unity. We will extract the eigenvalues of  the minors $M_j (1\leq j \leq n)$  of  the Hermitian matrix  $A$ from the eigenvectors of $A$ by virtue of the  eigenvector-eigenvalue identity.  So it can been seen as extracting eigenvalues from eigenvectors.

 Zhi-Wei Sun began  a systematical research on permanents and determinants of matrices in number theory  in 2018.  He found 
 many identities with rich arithmetic meanings and also raised many open problems.   His results are in  \cite{sun4},   \cite{sun3},   
\cite{sun2},   \cite{sun1}.  One of Sun's conjectures is the following one which was firstly raised in 2018. 
 \begin{conjecture}[Zhi-Wei Sun,  Conjecture 4.3 of \cite{sun1}]\label{sun}
Let $n>1$ be a integer and $\zeta$ a primitive $n$-th root of unity. 
\begin{enumerate}
	\item If $n$ is a even number, then
	\begin{equation}\label{eq1.1}
		\sum_{\tau \in D(n)} \prod_{j=1}^{n} \dfrac{1}{1-\zeta^{j-\tau(j)}}=\dfrac{\left( \left( n-1\right) !!\right) ^{2}}{2^n}=\dfrac{n!}{4^n}\binom{n}{n/2},
	\end{equation}
\item if $n$ is an odd number, then
\begin{equation}\label{eq1.2}
	\sum_{\tau \in D(n-1)} \prod_{j=1}^{n-1} \dfrac{1}{1-\zeta^{j-\tau(j)}}=\dfrac{1}{n}\left(\dfrac{n-1}{2}! \right)^{2}, 
\end{equation}
and
	\begin{equation}\label{eq1.3}
\sum_{\tau \in D(n-1)} \operatorname{sign}(\tau) \prod_{j=1}^{n-1} \dfrac{1}{1-\zeta^{j-\tau(j)}}=\frac{(-1)^{\frac{n-1}{2}}}{n}\left(\frac{n-1}{2} !\right)^{2} , 
\end{equation}
where $D(n-1)$ is the set of all derangements $\tau$ of indices $j=1, \ldots, n-1$ such that $\tau(j) \neq j$ for all $j=1, \ldots, n-1$.
\end{enumerate}
\end{conjecture}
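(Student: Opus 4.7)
The plan is to realize the three sums as the permanent or determinant of a single $n\times n$ Hermitian circulant matrix $A$, compute its spectrum explicitly, and then extract the required values via the eigenvector-eigenvalue identity. Define $A$ by $A_{jk}=1/(1-\zeta^{j-k})$ for $j\ne k$ and $A_{jj}=0$; Hermiticity follows from $\overline{1-\zeta^{j-k}}=1-\zeta^{k-j}$, while circulancy forces the normalized eigenvectors $v_{m}$ to be the Fourier columns with $j$-th coordinate $n^{-1/2}\zeta^{jm}$, so $|v_{m,j}|^{2}=1/n$ for every $m,j$. From the elementary identity $\sum_{k=0}^{n-1}k\zeta^{rk}=-n/(1-\zeta^{r})$ one derives $\sum_{r=1}^{n-1}\zeta^{ar}/(1-\zeta^{r})=a-(n+1)/2$ for $1\le a\le n-1$, whence the eigenvalues of $A$ are the symmetric arithmetic progression $\lambda_{m}=(n-1)/2-m$, $m=0,\ldots,n-1$.

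Because $A_{jj}=0$, only derangements contribute to $\operatorname{per}$ and $\det$, so the three sums equal respectively $\operatorname{per}(A)$ (for (1.1) with $n$ even), $\operatorname{per}(M_{n})$ (for (1.2) with $n$ odd), and $\det(M_{n})$ (for (1.3) with $n$ odd), where $M_{n}$ is the $(n-1)\times(n-1)$ principal minor of $A$ obtained by deleting row and column $n$. For (1.3), note that $\lambda_{(n-1)/2}=0$ when $n$ is odd, and apply the eigenvector-eigenvalue identity at this zero eigenvalue and the index $j=n$:
\[
(-1)^{n-1}\det(M_{n})=|v_{(n-1)/2,n}|^{2}\prod_{m\ne(n-1)/2}(-\lambda_{m})=\tfrac{1}{n}\prod_{m\ne(n-1)/2}(-\lambda_{m}).
\]
Since the nonzero eigenvalues of $A$ are exactly $\pm 1,\pm 2,\ldots,\pm(n-1)/2$, the product evaluates to $(-1)^{(n-1)/2}((\tfrac{n-1}{2})!)^{2}$, yielding (1.3).

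For the permanental identities (1.1) and (1.2), the matrix size $N$ (either $n$ or $n-1$) is even. Conjugating by the diagonal unitaries $D_{1}=\operatorname{diag}(e^{i(-\pi/4+\pi j/n)})_{j}$ and $D_{2}=\operatorname{diag}(e^{i(-\pi/4-\pi j/n)})_{j}$ transforms $A$ (respectively $M_{n}$) into the real skew-symmetric matrix $B'_{jk}=1/(2\sin(\pi(j-k)/n))$; the factor $\det(D_{1})\det(D_{2})=(-i)^{N}=(-1)^{N/2}$ rescales both $\operatorname{per}$ and $\det$ identically. Combined with the spectrum (which gives $\det A=(-1)^{n/2}((n-1)!!)^{2}/2^{n}$ for $n$ even, and $\det M_{n}$ from (1.3) for $n$ odd), the identities (1.1) and (1.2) become equivalent to $\operatorname{per}(B')=(-1)^{N/2}\det(B')$ for our real skew-symmetric $B'$. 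I would prove this in three steps: (a) reversing any odd-length cycle of a derangement $\tau\in D(N)$ negates $\prod_{j}B'_{j,\tau(j)}$ because $B'_{ab}=-B'_{ba}$, so all contributions from derangements containing an odd cycle cancel pairwise; (b) the remaining non-involution all-even-cycle contributions collapse via Ptolemy-type sine identities such as $\sin\alpha\sin(\beta-\gamma)+\sin\beta\sin(\gamma-\alpha)+\sin\gamma\sin(\alpha-\beta)=0$; and (c) the involution sum equals $(-1)^{N/2}\sum_{\pi}\prod_{\{a,b\}\in\pi}(B'_{ab})^{2}$, which by the same family of trigonometric identities matches $(-1)^{N/2}\operatorname{Pf}(B')^{2}=(-1)^{N/2}\det(B')$. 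I expect the chief technical obstacle to be this trigonometric collapse in (b) and (c); the spectrum calculation, the identification of the sums as permanents and determinants, and the single application of the eigenvector-eigenvalue identity at the zero eigenvalue are essentially algorithmic.
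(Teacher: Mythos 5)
Your treatment of \eqref{eq1.3} is correct and is essentially the paper's argument: you identify the sum as $\det(M_n)$ for the Hermitian circulant $A_{jk}=1/(1-\zeta^{j-k})$, compute the spectrum $\{(n-1)/2-m\}_{m=0}^{n-1}$ (the paper cites Calogero--Perelomov for the equivalent matrix $1+\mathrm{i}\cot(\pi(j-k)/n)=2A_{jk}$), and apply the eigenvector-eigenvalue identity at the zero eigenvalue with $|v_{i,n}|^2=1/n$. The arithmetic checks out, and your reduction of \eqref{eq1.1} and \eqref{eq1.2} to the single claim $\operatorname{per}=(-1)^{N/2}\det$ (with $\det A$ from the spectrum and $\det M_n$ from \eqref{eq1.3}) is also the same skeleton as the paper's Theorem \ref{thm3.1}.

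The gap is in your step (b). After the odd-cycle cancellation (which your passage to the real skew-symmetric matrix $B'_{jk}=1/(2\sin(\pi(j-k)/n))$ handles cleanly via cycle reversal), you must show that the surviving derangements whose cycle type contains an even cycle of length at least $4$ also cancel. The only identity you exhibit is the three-term relation $\sin\alpha\sin(\beta-\gamma)+\sin\beta\sin(\gamma-\alpha)+\sin\gamma\sin(\alpha-\beta)=0$, which governs $3$-cycles --- but those are odd and were already killed in step (a). What is actually needed is the statement that for any $r\ge 3$ pairwise distinct complex numbers $a_1,\dots,a_r$, the sum over all $r$-cycles $\tau$ of $\prod_{j}\frac{1}{a_{\tau(j)}-a_j}$ vanishes (applied blockwise to each cycle of length $\ge 4$ in the cycle partition, after writing $1/(1-\zeta^{j-k})=\zeta^k/(\zeta^k-\zeta^j)$ so the product factors over cycles). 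This is precisely Lemma \ref{lam3.2} of the paper, proved by grouping the $(r-1)!$ cycles into classes of size $r-1$ and telescoping a partial-fraction identity; it does not follow from the three-term sine identity by any obvious induction, and you neither state nor prove it. Until you supply this lemma (or an equivalent), the proofs of \eqref{eq1.1} and \eqref{eq1.2} are incomplete. A smaller remark: your step (c) is redundant --- once (b) holds for both the permanent and the determinant expansions, only involutions survive, each carrying sign $(-1)^{N/2}$, so $\operatorname{per}(B')=(-1)^{N/2}\det(B')$ follows immediately without invoking $\operatorname{Pf}(B')^2$.
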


\section{\bf \Large Eigenvalues from eigenvectors}
In this section, we will prove the equation \eqref{eq1.3}. 
The left hand side of \eqref{eq1.3} is the determinant of  an $(n-1)\times (n-1)$ sub-matrix of the $n\times n$ matrix  $A=(a_{jk})$ such that 
$
a_{j k}=\left(1-\delta_{j k}\right)\left\{1+i \cot \frac{(j-k) \pi}{n}\right\}
$, where $\delta_{j k}=1$ if $j=k$; and $0$ if $j\neq k$.  The following theorem gives the eigenvalues and eigenvectors of $A$. 
\begin{theorem}[F. Calogero and A.M. Perelomov, Theorem 1 of \cite{CP}]\label{CP}
The off-diagonal hermitian matrix of order $n$ whose elements are defined by the formula
$$
a_{j k}=\left(1-\delta_{j k}\right)\left\{1+i \cot \frac{(j-k) \pi}{n}\right\}
$$
has the integer eigenvalues
$$
\lambda_{i}=2 i-n-1, \quad i=1,2, \ldots, n
$$
and the corresponding eigenvectors $v_{i}$ with norm $1$ have components
$$
v_{i,j}=\exp \left(-\frac{2 \pi {\rm{i}} i j}{n}\right)/\sqrt{n}, \quad j=1,2, \ldots, n .
$$\end{theorem}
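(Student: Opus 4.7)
My approach is to verify directly that each proposed $v_{i}$ is an eigenvector of $A$ with the claimed eigenvalue $\lambda_{i}$, and then to invoke general spectral theory to close out orthogonality and completeness.

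The first step is to rewrite the matrix entries in a form better suited to computation with roots of unity. From
\[
1+\mathrm{i}\cot\theta \;=\; \frac{\sin\theta+\mathrm{i}\cos\theta}{\sin\theta} \;=\; \frac{2}{1-e^{2\mathrm{i}\theta}},
\]
and writing $\zeta=e^{2\pi\mathrm{i}/n}$, one obtains $a_{jk}=\dfrac{2}{1-\zeta^{j-k}}$ for $j\ne k$ and $a_{jj}=0$. This cleans up subsequent calculations and, as a bonus, makes visible the link between $A$ and the sums appearing in Conjecture \ref{sun}.

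The second step is to apply $A$ to $v_{i}$. With the substitution $m=k-j \pmod n$, running over $m=1,\ldots,n-1$,
\[
(Av_{i})_{j} \;=\; \frac{1}{\sqrt{n}}\sum_{k\ne j}\frac{2}{1-\zeta^{j-k}}\,\zeta^{-ik} \;=\; v_{i,j}\cdot 2\sum_{m=1}^{n-1}\frac{\zeta^{-im}}{1-\zeta^{-m}}.
\]
Crucially, the sum on the right is independent of $j$, so the eigenvector claim reduces to the scalar identity
\[
2\sum_{m=1}^{n-1}\frac{\zeta^{-im}}{1-\zeta^{-m}} \;=\; 2i-n-1 \qquad (1\le i\le n),
\]
and, after relabeling $\omega=\zeta^{-1}$ (still a primitive $n$-th root of unity), to
\[
S_{i} \;:=\; \sum_{m=1}^{n-1}\frac{\omega^{im}}{1-\omega^{m}} \;=\; i-\frac{n+1}{2}.
\]

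The third step is to prove this identity. I would write $\frac{\omega^{im}}{1-\omega^{m}}=\frac{1}{1-\omega^{m}}-\frac{1-\omega^{im}}{1-\omega^{m}}$ and expand the second fraction as the geometric sum $1+\omega^{m}+\cdots+\omega^{(i-1)m}$, so that
\[
S_{i} \;=\; \sum_{m=1}^{n-1}\frac{1}{1-\omega^{m}}\;-\;\sum_{k=0}^{i-1}\sum_{m=1}^{n-1}\omega^{km}.
\]
The first sum equals $\tfrac{n-1}{2}$ by the classical identity obtained from the logarithmic derivative of $\frac{x^{n}-1}{x-1}=\prod_{m=1}^{n-1}(x-\omega^{m})$ at $x=1$. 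In the double sum, the inner sum is $n-1$ when $k=0$ and $-1$ when $1\le k\le i-1\le n-1$, totaling $n-i$. Hence $S_{i}=\tfrac{n-1}{2}-(n-i)=i-\tfrac{n+1}{2}$, as required.

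Finally, $\|v_{i}\|=1$ is immediate from $|v_{i,j}|=1/\sqrt{n}$, and since the $\lambda_{i}=2i-n-1$ are $n$ distinct real numbers, the Hermitian matrix $A$ automatically has $\{v_{i}\}$ as a complete orthonormal eigenbasis. The only step with real substance is the scalar identity in the third paragraph; the cotangent-to-roots-of-unity rewriting in the first step is what unlocks it, and I do not anticipate any serious obstacle beyond careful index bookkeeping.
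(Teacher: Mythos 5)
Your proof is correct. Note that the paper does not prove this statement at all --- it is imported verbatim from Calogero--Perelomov \cite{CP} --- so there is no internal argument to compare against; your direct verification (rewriting $a_{jk}=2/(1-\zeta^{j-k})$, which is exactly the identity the paper itself uses later in the proof of \eqref{eq1.3}, reducing to the translation-invariant scalar sum $S_i$, and evaluating it via $\sum_{m=1}^{n-1}(1-\omega^m)^{-1}=\tfrac{n-1}{2}$ together with the vanishing of nontrivial character sums) is sound, self-contained, and essentially the standard argument. All steps check out, including the ranges $1\le k\le i-1\le n-1$ that guarantee $\omega^k\ne 1$ in the geometric-sum step, and the appeal to distinctness of the $\lambda_i$ for orthogonality.
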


If $A$ is an $n \times n$ Hermitian matrix, then we denote its $n$ real eigenvalues by $\lambda_{1}(A), \ldots, \lambda_{n}(A)$. We can find an  orthonormal basis  $v_1, v_2, \cdots, v_n\in \C^n$ such that $v_i$ is associated to $\lambda_{i}(A), 1\leq i\leq n$.  For any $i, j=1, \ldots, n$, let  $ v_{i, j} $ denote the $ j^{\text {th }}$ component of $ v_{i}.$ 
 If $1 \leq j \leq n$, let $M_{j}$ denote the $(n-1) \times (n-1)$ minor formed from $A$ by deleting the $j^{\text {th }}$ row and column from $A$. This is also a Hermitian matrix, and thus has $n-1$ real eigenvalues $\lambda_{1}\left(M_{j}\right), \ldots, \lambda_{n-1}\left(M_{j}\right)$.

\begin{theorem}[Eigenvector-eigenvalue identity, Theorem 1 of \cite{Tao}]\label{Tao}
With the notation as above, we have
\begin{equation}\label{tao2}
\left|v_{i, j}\right|^{2} \prod_{k=1 ; k \neq i}^{n}\left(\lambda_{i}(A)-\lambda_{k}(A)\right)=\prod_{k=1}^{n-1}\left(\lambda_{i}(A)-\lambda_{k}\left(M_{j}\right)\right) .
\end{equation}\end{theorem}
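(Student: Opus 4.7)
The plan is to compute the $(j,j)$-entry of the adjugate matrix $\mathrm{adj}(\lambda I - A)$ in two different ways and match them at $\lambda = \lambda_i(A)$. By cofactor expansion, this entry equals the characteristic polynomial of the deleted-minor,
\[
p_{M_j}(\lambda) = \det(\lambda I_{n-1} - M_j) = \prod_{k=1}^{n-1}(\lambda - \lambda_k(M_j)),
\]
so evaluating it at $\lambda = \lambda_i(A)$ reproduces the right-hand side of \eqref{tao2}. On the other hand, spectral theory lets one expand the same entry as a weighted sum of the squared moduli $|v_{k,j}|^2$ over $k=1,\dots,n$; when $\lambda = \lambda_i(A)$ almost every weight will vanish, leaving only the $k=i$ contribution which is precisely the left-hand side of \eqref{tao2}.

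Concretely, choose an orthonormal eigenbasis $v_1,\dots,v_n$ of $A$ so that $A = \sum_{k=1}^{n} \lambda_k(A)\, v_k v_k^{\ast}$. For $\lambda$ outside the spectrum of $A$ we have
\[
(\lambda I - A)^{-1} = \sum_{k=1}^{n} \frac{v_k v_k^{\ast}}{\lambda - \lambda_k(A)} .
\]
Multiplying by $p_A(\lambda) = \prod_{k=1}^{n} (\lambda - \lambda_k(A))$ and using $\mathrm{adj}(\lambda I - A) = p_A(\lambda)\,(\lambda I - A)^{-1}$, the $(j,j)$-entry gives the polynomial identity
\[
p_{M_j}(\lambda) \; = \; \sum_{k=1}^{n} |v_{k,j}|^{2}\prod_{m \neq k}(\lambda - \lambda_m(A)) ,
\]
which a priori holds off the spectrum but extends to all $\lambda \in \C$ by continuity of polynomials. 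Setting $\lambda = \lambda_i(A)$, for every $k \neq i$ the index $m=i$ appears in the product $\prod_{m \neq k}$, so that summand carries the factor $\lambda_i(A) - \lambda_i(A) = 0$ and drops out. Only the $k=i$ term survives, giving
\[
p_{M_j}(\lambda_i(A)) = |v_{i,j}|^{2} \prod_{m \neq i}\bigl(\lambda_i(A) - \lambda_m(A)\bigr),
\]
which is exactly \eqref{tao2}.

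The one genuine subtlety is degeneracy: when $\lambda_i(A)$ is a repeated eigenvalue of $A$, the quantity $|v_{i,j}|^2$ depends on the choice of orthonormal basis inside the eigenspace. However, the product $\prod_{m \neq i}(\lambda_i(A) - \lambda_m(A))$ on the left of \eqref{tao2} vanishes in this case, and by Cauchy interlacing the eigenvalue $\lambda_i(A)$ must also occur among the $\lambda_k(M_j)$, so the right-hand side vanishes too; the identity collapses to $0=0$ and is still valid. For the application to \eqref{eq1.3}, Theorem~\ref{CP} produces the distinct integer eigenvalues $\lambda_i = 2i-n-1$, so no such degeneracy occurs and \eqref{tao2} will be directly applicable once the explicit eigenvectors $v_{i,j}$ are substituted.
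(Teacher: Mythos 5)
Your proof is correct. The paper does not prove this statement at all --- it simply quotes it as Theorem 1 of the Denton--Parke--Tao--Zhang survey --- so there is nothing to compare against; your adjugate argument (computing the $(j,j)$-entry of $\mathrm{adj}(\lambda I - A)$ once by cofactor expansion as the characteristic polynomial of $M_j$ and once via the spectral resolution of $(\lambda I - A)^{-1}$, then evaluating at $\lambda=\lambda_i(A)$) is one of the standard proofs given in that survey, and your treatment of the degenerate case is sound (indeed, the polynomial identity already forces both sides to vanish there, so the appeal to Cauchy interlacing is not even needed).
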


{\it \bf Proof of Equation \eqref{eq1.3}:}

Note that  $$ \begin{aligned}\frac{1}{1-\zeta^{j-k}}
&=\frac{1}{2}\left(1+\frac{1+\zeta^{j-k}}{1-\zeta^{j-k}}\right)\\
&=\frac{1}{2}\left(1+{\rm{i}}\cot\frac{\pi (j-k)}{n}\right).\end{aligned}
$$
Let $A=(A_{jk})$ be the $n \times n$ Hermitian matrix  defined  in Theorem \ref{CP}.  By Theorem \ref{CP},  the eigenvalues of  $A$ are \begin{equation}\label{eg2}
\lambda_1=n-1, \lambda_2=n-3, \cdots, \lambda_{i}=n+1-2i,  \cdots, \lambda_n= 1-n.
\end{equation}
Let $i=\frac{n+1}{2}$ and $j=n$. Then $\lambda_i=0$. By Theorem \ref{Tao}, 
 \[
 \left|v_{i, n}\right|^{2} \prod_{k=1 ; k \neq i}^{n}\left(-\lambda_{k}(A)\right)=\prod_{k=1}^{n-1}\left(-\lambda_{k}\left(M_{n}\right)\right) .
 \] 
By Theorem \ref{CP}, $\left|v_{s, n}\right|^{2}=\frac{1}{n}$. Hence 
 $$|M_n|={\prod_{k=1}^{n-1}\lambda_{k}\left(M_{n}\right) }=\frac{(-1)^{\frac{n-1}{2}}((n-1)!!)^2}{n}.$$
 Hence 
$$ \begin{aligned}
\sum_{\tau \in D(n-1)} \operatorname{sign}(\tau) \prod_{j=1}^{n-1} \frac{1}{1-\zeta^{j-\tau(j)}}&=2^{1-n}|M_n|=\frac{2^{1-n}(-1)^{\frac{n-1}{2}}((n-1)!!)^2}{n}\\
&=\frac{(-1)^{\frac{n-1}{2}}}{n}\left(\frac{n-1}{2} !\right)^{2} .
\end{aligned}
$$
Hence Equation \eqref{eq1.3} is proved.
\qed

Han Wang and Zhi-Wei Sun  prove \cite{sun0}  another  conjecture involving derangements and roots of unity by similar arguements.  
My colleague Keqin Liu suggested a different strategy to prove Conjecture \ref{sun}.  Although the eigenvalues of $A$ are rational integers as in Theorem \ref{CP},   the Numerical computation shows that 
there is no simple pattern of the eigenvalues of $M_n$ in the proof of equation \eqref{eq1.3} . However the   eigenvalues of $M_nB$ are very simple for carefully chosen
diagonal matrices. 
Let  $B_{s}$  the  diagonal matrix whose diagonal entries are given as
$
1-\zeta^{i s}, \quad 1 \leq i \leq n-1,
$
where $s \in\left\{-\frac{n-1}{2},-\frac{n-3}{2}, \cdots,-1,1,2, \cdots, \frac{n-1}{2}\right\}$.  Let  $M_{n,s} = M_n B_{s}$.  Then 
 \eqref{eq1.3} is equivalent to 
 \begin{equation}\label{liu}
 \operatorname{det}\left(M_{n,1}\right)=(-1)^{\frac{n-1}{2}}\left(\frac{n-1}{2} !\right)^{2}.
 \end{equation}
Liu found that  the eigenvalues of $M_{n,1}$ for are  $$-\frac{n-1}{2},  \cdots, -1, 1, \cdots, \frac{n-1}{2}$$ by numerical computation. He discusses  this strategy in \cite{Liu}.  

Let $f(x)=\prod_{k=1}^{n-1}\left(x-\lambda_{k}\left(M_{j}\right)\right)$. Then by \eqref{tao2} and \eqref{eg2}, 
 \begin{equation}\label{eq2.4}f(\lambda_i(A))=f(n+1-2i)=\frac{1}{2n} \prod_{k=1 ; k \neq i}^{n}\left(k-i\right), \ 1\leq i\leq n.  \end{equation}
 Hence $f(x)$ is completely determined by the Lagrange interpolation.
\section{\bf The permanents of matrices}

In this section, we will prove equation \eqref{eq1.1} and equation \eqref{eq1.2}. 

\begin{lemma}\label{lam3.2}
	Let $n$ be an integer greater than $2$, and $x_{1},\cdots,x_{n}$  pairwise distinct complex numbers. 
	Let  $L(n)$ be the set of all elements  $\tau\in S(n)$ such that $\tau$ is an  $n$-cycle. Then  \[\sum_{\tau\in L(n)}\prod_{j=1}^{n}\frac{1}{x_{\tau(j)}-x_j}=0.\]
\end{lemma}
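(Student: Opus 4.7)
The plan is to view the left-hand side as a rational function $F(x_1, \ldots, x_n)$ of the $x_i$'s, show it has no polar divisors and decays to zero at infinity in $x_1$, and conclude that $F \equiv 0$. Each summand $\prod_{j=1}^{n} 1/(x_{\tau(j)}-x_j)$ has at worst simple poles along divisors of the form $\{x_i = x_j\}$ with $i \neq j$, so the same is true for $F$. Moreover $F$ is symmetric under the diagonal $S_n$-action $x_i \mapsto x_{\sigma(i)}$ (this substitution merely reindexes the sum via $\tau \mapsto \sigma \tau \sigma^{-1}$, which is a bijection of $L(n)$), so it suffices to verify residue cancellation along a single representative divisor, say $\{x_a = x_b\}$.

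For the cancellation, I would pair up the $n$-cycles contributing to the pole at $x_a = x_b$. These are the cycles with $\tau(a) = b$ (yielding the factor $1/(x_b - x_a)$) together with those with $\tau(b) = a$ (yielding $1/(x_a - x_b)$); for $n \geq 3$ the two conditions are mutually exclusive, since an $n$-cycle contains no transposition $(a\, b)$. Writing each first-type cycle as $\tau = (a, b, c_1, c_2, \ldots, c_{n-2})$, I would pair it bijectively with $\hat\tau := (b, a, c_1, c_2, \ldots, c_{n-2})$ of the second type. The factors indexed by $j \in \{c_1, \ldots, c_{n-3}\}$ are identical in both products, so after pulling them out the sum of the two contributions reduces to
\[
\frac{1}{(x_b - x_a)(x_{c_1} - x_b)(x_a - x_{c_{n-2}})} + \frac{1}{(x_a - x_b)(x_{c_1} - x_a)(x_b - x_{c_{n-2}})}.
\]
Placed over a common denominator, the numerator simplifies to $(x_b - x_a)(x_{c_1} - x_{c_{n-2}})$, which kills the singular factor $(x_b - x_a)^{-1}$; hence the paired contribution is regular at $x_a = x_b$, so $F$ has no pole along this divisor (and, by the symmetry above, no pole anywhere).

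Finally, in each summand the variable $x_1$ appears in exactly two factors, at $j = 1$ and $j = \tau^{-1}(1)$, which differ because $\tau$ has no fixed point; hence the summand is $O(|x_1|^{-2})$ as $|x_1| \to \infty$ with the other $x_j$ held fixed and distinct. Viewing $F$ as a rational function of $x_1$ with parameters $x_2, \ldots, x_n$, the previous paragraph shows it has no poles, so it is a polynomial in $x_1$; the decay estimate then forces this polynomial to be identically zero, and since the parameter values were arbitrary, $F \equiv 0$. The main obstacle I anticipate is the algebraic simplification in the bracket above; the rest — the bijection $\tau \leftrightarrow \hat\tau$, the identification of common factors, and the $O(|x_1|^{-2})$ decay — is straightforward bookkeeping.
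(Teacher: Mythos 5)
Your proof is correct, and I verified the one computation it hinges on: writing $u=x_a$, $v=x_b$, $p=x_{c_1}$, $q=x_{c_{n-2}}$, the two displayed terms combine to $\frac{p-q}{(p-v)(p-u)(u-q)(v-q)}$, so the factor $(x_b-x_a)^{-1}$ does cancel and each pair $\{\tau,\hat\tau\}$ is regular along $\{x_a=x_b\}$; together with the fact that an $n$-cycle ($n\geq 3$) cannot satisfy $\tau(a)=b$ and $\tau(b)=a$ simultaneously, this gives the claimed pole cancellation, and the $O(|x_1|^{-2})$ decay finishes the job. However, your route is genuinely different from the paper's. The paper partitions $L(n)$ into equivalence classes of size $n-1$ --- all cycles $(1\ a_{i+1}\ \cdots\ a_i)$ obtained by cyclically rotating a fixed tail $(a_2,\dots,a_n)$ --- and shows each class already sums to zero \emph{exactly}, by exhibiting the ratios $\lambda_i/\lambda_1$ as a telescoping difference of the quantities $\frac{1}{x_1-x_{a_i}}$; no global argument about rational functions is needed. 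You instead cancel terms two at a time, which only kills the residue along each divisor $\{x_a=x_b\}$, and you must then invoke the global step (a rational function of $x_1$ with no poles and decay at infinity vanishes) to conclude. What your approach buys is a much lighter local computation (only three factors differ between $\tau$ and $\hat\tau$, and the cancellation is a two-line partial-fractions identity) plus a reusable ``no poles and vanishing at infinity'' template; what the paper's approach buys is a completely elementary, purely algebraic proof in which the sum is seen to vanish class by class, with no appeal to the structure of rational functions or their behavior at infinity. Both arguments are complete.
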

\begin{proof}
	For $\tau_{1}=(1\ a_{2}\ a_{3}\ \cdots\ a_{n}), \tau_{2}=(1\ b_{2}\ b_{3}\ \cdots\ b_{n})\in L(n)$, define $\tau_{1}\sim\tau_{2}$ if and only if $(a_{2}\ a_{3}\ \cdots\ a_{n})=( b_{2}\ b_{3}\ \cdots\ b_{n})\in L(n-1)$. One can easily see  that  $\sim$ is an equivalence relation on $L(n)$. Obviously, there are $n-1$ elements in each equivalence class.
	Take an arbitrary equivalence class
	\begin{align*}
		\sigma_1& = (1\ a_2\ a_3\ \cdots\ a_{n-1}\ a_{n}),\\
		\sigma_2& = (1\ a_3\ a_4\ \cdots\ a_{n}\ a_{2}),\\
		&\ \ \ \ \ \vdots\\
		\sigma_{n-1}& = (1\ a_n\ a_2\ \cdots\ a_{n-2}\ a_{n-1}).
	\end{align*}
Then we just need to show $\displaystyle \sum_{i=1}^{n-1}\prod_{j=1}^{n}\frac{1}{x_{\sigma_i(j)}-x_j}=0$. Let $\displaystyle \lambda_i=\prod_{j=1}^{n}\frac{1}{x_{\sigma_i(j)}-x_j}$, obviously, $\alpha_i\neq0$ and \begin{equation*}
	\frac{\lambda_i}{\lambda_1}=\left\lbrace 
	\begin{array}{ll}
		\frac{(x_1-x_{a_2})(x_{1}-x_{a_n})}{x_{a_2}-x_{a_n}}\left(\frac{1}{x_1-x_{a_{i+1}}}-\frac{1}{x_{1}-x_{a_i}}\right),&i\neq1,\\
		1,&i=1.
	\end{array}\right.
\end{equation*}
Thus $\displaystyle \sum_{i=1}^{n-1}\frac{\lambda_i}{\lambda_1}=1+\frac{(x_1-x_{a_2})(x_{1}-x_{a_n})}{x_{a_2}-x_{a_n}}\left(\frac{1}{x_1-x_{a_n}}-\frac{1}{x_1-x_{a_2}}\right)=0$. Hence we have  $\displaystyle \sum_{i=1}^{n-1}\alpha_i=0.$ This complete the proof of lemma \ref{lam3.2}.
\end{proof}

\begin{theorem}\label{thm3.1}
	Let  $n$ be a positive integer, and $x$ a complex number  such that  $x^k\neq 1$ for any  positive integer $k<n$. Assume \[A=(a_{ij})_{n\times n}=\displaystyle \left((1-\delta_{ij})\left(1-\frac{1}{1-x^{i-j}}\right)\right)_{n\times n},\]and \[B=(b_{ij})\in M_{(n-k)\times(n-k)}(\mathbb{C})\] be the sub-matrix obtained from $A$ by deleting the $s_1th,\cdots,s_kth$ columns and the
	$s_1th,\cdots,s_k th$ rows, where  $k=0$ or $2\leqslant k<n,\ 1\leqslant s_{1}<s_{2}<\cdots <s_k\leqslant n$.\begin{enumerate}
		\item If $n-k$ is an odd number, then\[\sum_{\substack{\tau\in D(n-k)\\   \mathrm{sign}(\tau)=1}}\prod_{j=1}^{n-k}b_{j\tau(j)}=\sum_{\substack{\tau\in D(n-k)\\   \mathrm{sign}(\tau)=-1}}\prod_{j=1}^{n-k}b_{j\tau(j)}=0,\]
		\item If $n-k$ is a even number, then \[\sum_{\substack{\tau\in D(n-k)\\   \mathrm{sign}(\tau)=(-1)^{\frac{n-k}{2}+1}}}\prod_{j=1}^{n-k}b_{j\tau(j)}=0.\]
	\end{enumerate}
\end{theorem}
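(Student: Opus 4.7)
My plan is to recast the entries of $B$ in the form used in Lemma \ref{lam3.2}, decompose each derangement into disjoint cycles, and invoke Lemma \ref{lam3.2} to annihilate every cycle of length $\geqslant 3$.

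Since the entries are only defined off the diagonal, I first simplify
\[
a_{ij}=1-\frac{1}{1-x^{i-j}}=\frac{x^{i-j}}{x^{i-j}-1}=\frac{x^{i}}{x^{i}-x^{j}},
\]
so, writing $c_1<\cdots <c_{n-k}$ for the row/column indices of $A$ retained in $B$, we have $b_{ij}=\frac{x^{c_i}}{x^{c_i}-x^{c_j}}$ for $i\neq j$. The assumption $x^t\neq 1$ for $0<t<n$ guarantees that the scalars $y_\ell:=x^{c_\ell}$, $\ell=1,\ldots,n-k$, are pairwise distinct, so Lemma \ref{lam3.2} will apply with these $y_\ell$ in place of the $x_j$ there. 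Consequently, for every $\tau\in D(n-k)$,
\[
\prod_{j=1}^{n-k}b_{j\tau(j)}=\Bigl(\prod_{j=1}^{n-k}y_j\Bigr)\prod_{j=1}^{n-k}\frac{1}{y_j-y_{\tau(j)}},
\]
and the first factor is a constant independent of $\tau$.

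Next, for $\tau\in D(n-k)$, write $\tau=\gamma_1\cdots\gamma_r$ as a product of disjoint cycles of length $\geqslant 2$ with supports $S_1,\ldots,S_r$ partitioning $\{1,\ldots,n-k\}$. Group $\sum_{\tau\in D(n-k)}\prod_j b_{j\tau(j)}$ first by cycle type $\pi=(\lambda_1,\ldots,\lambda_r)$, with $\lambda_i=|S_i|\geqslant 2$, noting that $\mathrm{sign}(\tau)=(-1)^{(n-k)-r}$ is constant on each type; then group by the set partition $\{S_1,\ldots,S_r\}$ of that shape. Because $\tau$ is the disjoint product of its cycles, the inner sum factors as
\[
\prod_{i=1}^{r}\sum_{\gamma_i}\prod_{j\in S_i}\frac{1}{y_j-y_{\gamma_i(j)}},
\]
each $\gamma_i$ running over the $(\lambda_i-1)!$ cyclic orderings of $S_i$. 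Since $\prod_{j\in S_i}\frac{1}{y_j-y_{\gamma_i(j)}}=(-1)^{\lambda_i}\prod_{j\in S_i}\frac{1}{y_{\gamma_i(j)}-y_j}$, Lemma \ref{lam3.2} applied to the set $S_i$ (with $\lambda_i\geqslant 3$ distinct $y$'s) forces any factor with $\lambda_i\geqslant 3$ to vanish. Thus only cycle types with $\lambda_1=\cdots=\lambda_r=2$, i.e.\ fixed-point-free involutions, can contribute.

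It remains to finish case by case. If $n-k$ is odd, no all-$2$'s cycle type exists (such a partition would force $n-k$ even), so every cycle type contributes $0$; since $\mathrm{sign}(\tau)$ depends only on the cycle type, both sign classes in (1) sum to $0$. If $n-k$ is even, the only surviving cycle type is $(2,\ldots,2)$, whose derangements are involutions of sign $(-1)^{(n-k)/2}$, while every other cycle type contributes $0$; hence the sum over $\tau$ of opposite sign $(-1)^{(n-k)/2+1}$ vanishes, which is (2). The one non-routine step is the identification of the cycle-by-cycle sum with the form in Lemma \ref{lam3.2}; once the entries are written as $y_j/(y_j-y_{\tau(j)})$ and the $\tau$-independent prefactor is pulled out, the remainder is bookkeeping on cycle types and signs.
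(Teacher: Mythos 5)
Your proposal is correct and follows essentially the same route as the paper: rewrite the off-diagonal entries as $y_i/(y_i-y_j)$ with pairwise distinct $y_\ell=x^{c_\ell}$, pull out the constant prefactor, decompose derangements into disjoint cycles grouped by set partition, and use Lemma \ref{lam3.2} to annihilate every block of size at least $3$, leaving only fixed-point-free involutions, whose sign is $(-1)^{(n-k)/2}$. Your write-up actually makes the factorization of the inner sum over a fixed set partition (the paper's equation (3.1)) more explicit, and your sign bookkeeping is slightly cleaner, but the underlying argument is the same.
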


\pf
First, let's check this theorem when $\ell=n-k=1,2,3.$ If $\ell=1$, then $B=0$ and (1) holds; if $\ell=2$, then $B$ looks like
\[\begin{pmatrix}
	0 & \frac{1}{1-x^{-1}}\\
	\frac{1}{1-x} & 0 
\end{pmatrix},x\neq0.\]
Since there is only a odd permutation in $D(2)$, thus (2) holds; if $\ell=3,$ then $B$ looks like \[xy\begin{pmatrix}
0 & \frac{1}{x-1} & \frac{1}{y-1}\\
\frac{1}{1-x} & 0 & \frac{1}{y-x} \\
\frac{1}{1-y} & \frac{1}{x-y} & 0 \\ 
\end{pmatrix},x,y\neq 1\ \text{and}\ x\neq y.\]
Since there are only two odd permutations $(1\ 2\ 3)$, $(1\ 3\ 2)$ in $D(3)$ and  \[\frac{1}{x-1}\cdot\frac{1}{y-x}\cdot\frac{1}{1-y}+\frac{1}{y-1}\cdot\frac{1}{1-x}\cdot\frac{1}{x-y}=0.\]
This implies that (1) holds

If $\ell\geqslant4$ is odd, then \[\displaystyle \sum_{\substack{\tau\in D(\ell)\\   \mathrm{sign}(\tau)=1}}\prod_{j=1}^{\ell}b_{j\tau(j)}=x_1\cdots x_\ell\sum_{\substack{\tau\in D(\ell)\\   \mathrm{sign}(\tau)=1}}\prod_{j=1}^{\ell}\frac{1}{x_{\tau(j)}-x_j},\]

where $x_1,\cdots,x_\ell$ are different from each other.
For $Y=\{a_{1},\cdots,a_{r}\}\subset X=\{x_{1},\cdots,x_{\ell}\}$, define \[f(Y)=\sum_{\tau\in L(r)}\prod_{j=1}^{r}\frac{1}{a_{\tau(r)}-a_j}.\]By lemma \ref{lam3.2} we know that $f(X)=0$ when $|X|>2$. Note that
\begin{equation}\label{disjointunion}
	\sum_{\substack{\tau\in D(\ell)\\
			\mathrm{sign}(\tau)=1}}\prod_{j=1}^{\ell}\frac{1}{x_{\tau(j)}-x_j}=\sum_{\substack{X=\bigsqcup_{i=1}^{s}X_i\\|X_i|\geqslant 2\\ s\ \text{odd}}}\left(\prod_{j=1}^sf(X_i)\right).
\end{equation}
	Since $\ell$ is odd, every disjoint union $X=\bigsqcup_{i=1}^{s}X_i$ in the RHS of (\ref{disjointunion}) has an $X_i$ with $|X_i|>2$, thus\[\sum_{\substack{\tau\in D(\ell)\\
		\mathrm{sign}(\tau)=1}}\prod_{j=1}^{\ell}\frac{1}{x_{\tau(j)}-x_j}=0.\]
	Similarly, it can be shown that $\displaystyle \sum_{\substack{\tau\in D(\ell)\\
			\mathrm{sign}(\tau)=-1}}\prod_{j=1}^{\ell}\frac{1}{x_{\tau(j)}-x_j}=0.$
	
	If $\ell$ is even, one can prove (2) in the same way by using lemma \ref{lam3.2}. This complete the proof of this theorem.
	\epf
{\bf Proof of equation \eqref{eq1.1} and equation \eqref{eq1.2}: }	

	In the notations of Theorem  \ref{thm3.1},  let $x=\zeta$. If $k=0$ and $n$ is a even number, then we have \[\sum_{\substack{\tau\in D(n)\\   \mathrm{sign}(\tau)=(-1)^{\frac{n}{2}+1}}}\prod_{j=1}^{n}a_{j\tau(j)}=0,\]which implies \[\sum_{\tau\in D(n)}\prod_{j=1}^{n}a_{j\tau(j)}=(-1)^{\frac{n}{2}}\sum_{\tau\in D(n)}\mathrm{sign}(\tau)\prod_{j=1}^{n}a_{j\tau(j)}=\dfrac{\left( \left( n-1\right) !!\right) ^{2}}{2^n}.\]
If $k=1, s_{k}=1$, and $n$ is an odd number, then we have \[\sum_{\substack{\tau\in D(n-1)\\   \mathrm{sign}(\tau)=(-1)^{\frac{n+1}{2}}}}\prod_{j=1}^{n-1}a_{j\tau(j)}=0,\]which implies that \[\sum_{\tau\in D(n-1)}\prod_{j=1}^{n-1}a_{j\tau(j)}=(-1)^{\frac{n-1}{2}}\sum_{\tau\in D(n)}\mathrm{sign}(\tau)\prod_{j=1}^{n+1}a_{j\tau(j)}=\dfrac{1}{n}\left(\dfrac{n-1}{2}! \right)^2 .\]
Thus we complete the proof of \eqref{eq1.1} and \eqref{eq1.2}.\qed
\vskip 2cm

{\bf Acknowledgements:}   The author is deeply grateful  to his colleagues Zhi-Wei Sun and Keqin Liu for  very  helpful discussions.

\end{document}